\newtheorem*{theorem}{Theorem}
\newtheorem{corollary}{Corollary}
\newtheorem{lemma}{Lemma}
\renewcommand*\subjclass[2][2010]{\def\@subjclass{#2}\@ifundefined{subjclassname@#1}{\ClassWarning{\@classname}{Unknown edition (#1) of Mathematics Subject Classification; using '2010'.}}{\@xp\let\@xp\subjclassname\csname subjclassname@#1\endcsname}}
\renewcommand{\subjclassname}{\textup{2010} Mathematics Subject Classification}
\begin{document}

\title[Distributive and lower-modular elements]{Distributive and lower-modular elements of the lattice of monoid varieties}

\thanks{The work is supported by the Ministry of Science and Higher Education of the Russian Federation (project FEUZ-2020-0016)}

\author{Sergey V.~Gusev}
\address{Institute of Natural Sciences and Mathematics\\
Ural Federal University\\ 620000 Ekaterinburg, Russia}
\email{sergey.gusb@gmail.com}

\date{}

\begin{abstract}
The sets of all neutral, distributive and lower-modular elements of the lattice of semigroup varieties are finite, countably infinite and uncountably infinite, respectively.
In 2018, we established that there are precisely three neutral elements of the lattice of monoid varieties.
In the present work, it is shown that the neutrality, distributivity and lower-modularity coincide in the lattice of monoid varieties.
Thus, there are precisely three distributive and lower-modular elements of this lattice.
\end{abstract}

\keywords{Monoid, variety, lattice of varieties, distributive element of a lattice, lower-modular element of a lattice.}

\subjclass{20M07, 08B15}

\maketitle

\section{Introduction and summary}
\label{Sec: introduction}

An element $x$ of a lattice $L$ is
\begin{align*}
&\text{\textit{neutral} if}&&\forall\,y,z\in L\colon\ (x\vee y)\wedge(y\vee z)\wedge(z\vee x)\\
&&&\phantom{\forall\,y,z\in L\colon}{}=(x\wedge y)\vee(y\wedge z)\vee(z\wedge x);\\
&\text{\textit{standard} if}&&\forall\,y,z\in L\colon\ (x\vee y)\wedge z=(x\wedge z)\vee(y\wedge z);\\
&\text{\textit{distributive} if}&&\forall\,y,z\in L\colon\ x\vee(y\wedge z)=(x\vee y)\wedge(x\vee z);\\
&\text{\textit{modular} if}&&\forall\,y,z\in L\colon\ y\le z\rightarrow(x\vee y)\wedge z=(x\wedge z)\vee y;\\
&\text{\textit{cancellable} if}&&\forall\,y,z\in L\colon\ x\vee y=x\vee z\ \&\ x\wedge y=x\wedge z\rightarrow y=z;\\
&\text{\textit{lower-modular} if}&&\forall\,y,z\in L\colon\ x\le y\rightarrow x\vee(y\wedge z)=y\wedge(x\vee z).
\end{align*}
\emph{Costandard, codistributive} and \emph{upper-modular} elements are defined dually to standard, distributive and lower-modular ones respectively.
Neutral, cancellable and modular elements are self-dual.
It is evident that a neutral element is both standard and costandard; a standard or costandard element is cancellable; a cancellable element is modular; a [co]distributive element is lower-modular [upper-modular]. It is well known also that a [co]standard element is [co]distributive (see~\cite[Theorem~253]{Gratzer-11}, for instance).
These types of special elements play an important role in general lattice theory; significant information about special elements in a lattice can be found in~\cite[Section~III.2]{Gratzer-11}.

Many articles were devoted to special elements of different types in the lattice $\mathbb{SEM}$ of all semigroup varieties; an overview of results published before 2015 can be found in the survey~\cite{Vernikov-15}.
In 2018, the study of special elements in the lattice $\mathbb{MON}$ of all monoid varieties  was started (referring to monoid varieties, we consider monoids as algebras of type $(2,0)$).
Now there are three articles on this topic.
In~\cite{Gusev-18}, neutral and costandard elements of the lattice $\mathbb{MON}$ are described. In~\cite{Gusev-20}, it is shown that an element of $\mathbb{MON}$ is standard if and only if it is neutral. Finally, cancellable elements of the lattice $\mathbb{MON}$ are completely determine in~\cite{Gusev-Lee-21}. 
In the present work, we continue these investigations.
We describe all distributive and lower-modular elements in the lattice $\mathbb{MON}$.

Let $\mathbf T$, $\mathbf{SL}$, and $\mathbf{MON}$ denote the variety of trivial monoids, the variety of semilattice monoids, and the variety of all monoids, respectively.
Our main result is the following theorem.

\begin{theorem}
For a monoid variety $\mathbf V$, the following are equivalent:
\begin{itemize}
\item[\textup{(i)}] $\mathbf V$ is a lower-modular element of the lattice $\mathbb{MON}$;
\item[\textup{(ii)}] $\mathbf V$ is a distributive element of the lattice $\mathbb{MON}$;
\item[\textup{(iii)}] $\mathbf V$ is a standard element of the lattice $\mathbb{MON}$;
\item[\textup{(iv)}] $\mathbf V$ is a neutral element of the lattice $\mathbb{MON}$;
\item[\textup{(v)}] $\mathbf V$ is one of the varieties $\mathbf T$, $\mathbf{SL}$ or $\mathbf{MON}$.
\end{itemize}
\end{theorem}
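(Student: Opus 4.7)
The plan is to close the cycle of implications (v)$\Rightarrow$(iv)$\Rightarrow$(iii)$\Rightarrow$(ii)$\Rightarrow$(i)$\Rightarrow$(v). Four of these come essentially for free. The chain (iv)$\Rightarrow$(iii)$\Rightarrow$(ii)$\Rightarrow$(i) is the standard implication neutral $\Rightarrow$ standard $\Rightarrow$ distributive $\Rightarrow$ lower-modular, valid in every lattice and already recorded in the introduction. The equivalence (iv)$\Leftrightarrow$(v) is the main result of \cite{Gusev-18}, and the implication (iii)$\Rightarrow$(iv) is established in \cite{Gusev-20}. Hence all of the new content is concentrated in the single implication (i)$\Rightarrow$(v).

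For (i)$\Rightarrow$(v) I would argue contrapositively. Assuming $\mathbf{V}\notin\{\mathbf{T},\mathbf{SL},\mathbf{MON}\}$, the task is to exhibit monoid varieties $\mathbf{X}$ and $\mathbf{Y}$ with $\mathbf{V}\subseteq\mathbf{X}$ such that
\[
\mathbf{V}\vee(\mathbf{X}\wedge\mathbf{Y})\subsetneq\mathbf{X}\wedge(\mathbf{V}\vee\mathbf{Y}),
\]
in violation of the lower-modular law. A natural first case split is whether $\mathbf{SL}\subseteq\mathbf{V}$ or not, with further subdivision according to which short identities $\mathbf{V}$ satisfies (for example, identities forcing commutativity, aperiodicity, or bounded exponent). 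In each case the witnesses $\mathbf{X},\mathbf{Y}$ would be chosen from a stock of well-understood test monoid varieties defined by such short identities, for which joins and meets can be computed at the level of identity bases; the strict containment would then be certified by exhibiting a concrete word identity holding on the left side but refuted on the right.

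The main obstacle is the construction of witness pairs $(\mathbf{X},\mathbf{Y})$ that cover every $\mathbf{V}$ outside $\{\mathbf{T},\mathbf{SL},\mathbf{MON}\}$ in a uniform way. Because lower-modularity is strictly weaker than neutrality (indeed, weaker even than distributivity), the configurations used to refute neutrality in \cite{Gusev-18} or cancellability in \cite{Gusev-Lee-21} need not automatically refute lower-modularity, so some of those configurations will likely require genuine strengthening. My expectation is nonetheless that the bulk of the argument can reuse the test-variety constructions and identity-basis machinery developed in those papers, with careful bookkeeping to track on which side of the lower-modular identity each relevant word falls.
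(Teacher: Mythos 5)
Your reduction of the theorem to the single implication (i)$\Rightarrow$(v) is correct and matches the paper exactly: (iv)$\Leftrightarrow$(v) is \cite[Theorem~1.1]{Gusev-18}, (iii)$\Leftrightarrow$(v) is \cite[Theorem~1]{Gusev-20}, and (iii)$\Rightarrow$(ii)$\Rightarrow$(i) are general lattice-theoretic facts. You also correctly state the shape of what a refutation of lower-modularity must look like. But everything after that is a plan rather than a proof: you never produce a single witness pair $(\mathbf X,\mathbf Y)$, never exhibit an identity separating the two sides, and never verify a strict containment. The entire mathematical content of the implication (i)$\Rightarrow$(v) is therefore missing.

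Concretely, the gap is filled in the paper by ingredients your sketch does not anticipate. First, one needs that a proper lower-modular $\mathbf V$ is periodic (via \cite[Lemma~1]{Gusev-20}); the governing case split is then on whether $\mathbf V$ is completely regular, not on whether $\mathbf{SL}\subseteq\mathbf V$. Second, the technical core is a dedicated lemma: no proper variety containing $\mathbf{LRB}$ is lower-modular. Its proof requires manufacturing from an arbitrary nontrivial identity of $\mathbf V$ a balanced two-variable identity $\mathbf u\approx\mathbf v$, defining $\mathbf X$ and $\mathbf Y$ by identities built from $\mathbf u$, $\mathbf v$ and their $x\leftrightarrow y$ images, and then showing via the deduction lemma that certain two-element sets are full $\mathrm{FIC}(\mathbf X)$-classes, so that the relevant words are isoterms for $\mathbf Y\wedge(\mathbf V\vee\mathbf X)$ but not for $\mathbf V\vee(\mathbf Y\wedge\mathbf X)$. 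Third, the completely regular non-combinatorial case needs the explicit words $x^{4m+1}yx^{m+1}$, $x^{2m+2}yx^{3m+1}$, $x^{3m+1}yx^{2m+1}$, $x^{m+2}yx^{4m+1}$ and a similar isoterm analysis, and the non-completely-regular case needs external inputs --- the inclusion $\mathbf E\subset\mathbf C\vee\mathbf{LRB}$ from \cite{Lee-12} and an isoterm result from \cite{Sapir-21}. Your expectation that the configurations from \cite{Gusev-18} or \cite{Gusev-Lee-21} can be largely recycled is not borne out: the witnesses here are new constructions tailored specifically to lower-modularity, and building them is where essentially all of the work of the paper lies.
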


Note that the equivalence of items~(iv) and~(v) is proved in~\cite[Theorem~1.1]{Gusev-18}, while the equivalence of items~(iii) and~(v) is established in~\cite[Theorem~1]{Gusev-20}.

The theorem differs sharply from earlier results on special elements of the lattice $\mathbb{SEM}$. 
The set of all lower-modular elements of $\mathbb{SEM}$ is uncountably infinite (this fact easily follows from Theorem~3.2 in~\cite{Vernikov-15}); the set of all standard elements of $\mathbb{SEM}$ is countably infinite (see~\cite[Theorem~3.3]{Vernikov-15}); the set of all neutral elements of $\mathbb{SEM}$ is finite (see~\cite[Theorem~3.4]{Vernikov-15}). 
Moreover, an element of $\mathbb{SEM}$ is standard if and only if it is distributive (see~\cite[Theorem~3.3]{Vernikov-15}).
The theorem implies that these four types of special elements coincide in the lattice $\mathbb{MON}$ and the number of such elements is finite.

In general, a distributive element in a lattice need not be costandard.
The theorem together with Theorem~1.2 in~\cite{Gusev-18} implies the following interesting fact.

\begin{corollary}
\label{C: lower-modular is costandard}
Each lower-modular and so distributive element is costandard and so cancellable, codistributive, modular and upper-modular one in the lattice $\mathbb{MON}$.\qed
\end{corollary}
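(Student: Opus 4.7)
The proof is essentially a chain of citations, so the plan is to lay out the logical flow and then invoke the general lattice-theoretic implications listed at the start of the paper.

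The first step is to use the main Theorem to reduce the hypothesis to a finite list. If $\mathbf V$ is lower-modular (condition~(i)) then by the equivalence with~(v), $\mathbf V$ is one of $\mathbf T$, $\mathbf{SL}$, $\mathbf{MON}$. The same three varieties are obtained as the conclusion of \cite[Theorem~1.2]{Gusev-18}, which (by the phrasing in the announcement above the corollary) characterizes the costandard elements of $\mathbb{MON}$. Therefore each lower-modular element of $\mathbb{MON}$ is costandard. Since every distributive element of an arbitrary lattice is lower-modular (this is noted in the introductory list of implications), the same conclusion applies to distributive elements.

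The remaining assertions of the corollary are then purely abstract lattice-theoretic consequences of costandardness, and I would dispatch them by simply citing the chain of implications recorded in the introduction. Explicitly: a costandard element is codistributive by the dual of the well-known fact that standard implies distributive (cited as \cite[Theorem~253]{Gratzer-11}); a costandard element is cancellable by the dual of ``standard implies cancellable''; a cancellable element is modular; and a codistributive element is upper-modular (dual of ``distributive implies lower-modular''). Thus from costandardness we read off cancellability, codistributivity, modularity and upper-modularity in turn.

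There is no real obstacle: the whole argument is a bookkeeping exercise that splices together the Theorem of the present paper, \cite[Theorem~1.2]{Gusev-18}, and the generic implications among the six kinds of special elements listed in Section~\ref{Sec: introduction}. The only thing to be careful about is the direction of duality in each step, i.e., making sure that every conclusion drawn from costandardness is the \emph{dual} of a standard implication rather than the implication itself.
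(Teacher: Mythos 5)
Your proposal is correct and matches the paper's intended argument exactly: the corollary is stated with a \qed precisely because it follows by combining the Theorem (lower-modular, hence also distributive, elements are exactly $\mathbf T$, $\mathbf{SL}$, $\mathbf{MON}$) with the description of costandard elements in \cite[Theorem~1.2]{Gusev-18}, and then reading off cancellability, codistributivity, modularity and upper-modularity from the dual forms of the general implications listed in Section~\ref{Sec: introduction}. There is nothing to add or correct.
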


An element of $\mathbb{SEM}$ is modular whenever it is lower-modular (see~\cite[Corollary~3.9]{Vernikov-15}); is cancellable whenever it is distributive (compare Theorem~1.1 in~\cite{Shaprynskii-Skokov-Vernikov-19} and Theorem~3.3 in~\cite{Vernikov-15}). 
However, the costandardity does not follow from the distributivity (compare Theorems~3.3 and~3.4 in~\cite{Vernikov-15}) and the lower-modularity does not imply the cancellability in $\mathbb{SEM}$ (compare Theorem~1.1 in~\cite{Shaprynskii-Skokov-Vernikov-19} and Theorem~3.2 in~\cite{Vernikov-15}).

It is well known that the set of all neutral [standard] elements of a lattice forms a sublattice (see~\cite[Theorem~259]{Gratzer-11}).
In general, the sets of distributive or lower-modular elements in a lattice need not form a sublattice.
Nevertheless, the following fact easily follows from the theorem.

\begin{corollary}
\label{C: sublattice}
The set of all distributive [lower-modular] elements of the lattice $\mathbb{MON}$ forms a sublattice.\qed
\end{corollary}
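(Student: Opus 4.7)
The plan is very short because the Theorem does the heavy lifting. By the equivalence of items (i), (ii) and (v), the set of all lower-modular elements of $\mathbb{MON}$ and the set of all distributive elements of $\mathbb{MON}$ both coincide with $\{\mathbf{T},\mathbf{SL},\mathbf{MON}\}$. So to prove the corollary it suffices to check that $\{\mathbf{T},\mathbf{SL},\mathbf{MON}\}$ is closed under the join and meet in $\mathbb{MON}$.

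The first step is to observe the containments $\mathbf{T}\subseteq\mathbf{SL}\subseteq\mathbf{MON}$, which translate in the variety lattice to the chain $\mathbf{T}\le\mathbf{SL}\le\mathbf{MON}$. These are immediate: the trivial variety is contained in every variety, and every variety is contained in $\mathbf{MON}$ by definition.

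Since $\{\mathbf{T},\mathbf{SL},\mathbf{MON}\}$ is a three-element chain in $\mathbb{MON}$, for any two of its elements $\mathbf{X},\mathbf{Y}$ one has $\mathbf{X}\vee\mathbf{Y}=\max\{\mathbf{X},\mathbf{Y}\}$ and $\mathbf{X}\wedge\mathbf{Y}=\min\{\mathbf{X},\mathbf{Y}\}$, both of which lie in $\{\mathbf{T},\mathbf{SL},\mathbf{MON}\}$. Hence the set forms a sublattice. There is no genuine obstacle here; the only ``step'' is invoking the Theorem to identify the set explicitly, after which closure under the lattice operations is automatic because a chain is trivially a sublattice.
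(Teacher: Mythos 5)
Your proposal is correct and matches the paper's intended argument: the corollary carries a \qed precisely because, once the Theorem identifies both sets as $\{\mathbf T,\mathbf{SL},\mathbf{MON}\}$, the observation that this three-element chain is closed under join and meet is immediate. Nothing further is needed.
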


Note that the set of all distributive elements of $\mathbb{SEM}$ forms a sublattice (this easily follows from Theorem~3.3 in~\cite{Vernikov-15}) but the set of all lower-modular elements of $\mathbb{SEM}$ does not form a sublattice (this can be easily deduced from Theorem~3.2 in~\cite{Vernikov-15}).

The article consists of three sections.
Section~\ref{Sec: preliminaries} contains definitions, notation and auxiliary results.
Section~\ref{Sec: proof of theorem} is devoted to the proof of the theorem. 

\section{Preliminaries}
\label{Sec: preliminaries}

Let~$\mathfrak X^\ast$ denote the free monoid over a countably infinite alphabet~$\mathfrak X$.
Elements of~$\mathfrak X$ are called \textit{variables} and elements of~$\mathfrak X^\ast$ are called \textit{words}.
Words unlike variables are written in bold.
An identity is written as $\mathbf u \approx \mathbf v$, where $\mathbf u,\mathbf v \in \mathfrak X^\ast$.
Let $\lambda$ denote the empty word.
We denote by $\mathrm{End}(\mathfrak X^\ast)$ the endomorphism monoid of the monoid $\mathfrak X^\ast$.
An identity is written as $\mathbf u \approx \mathbf v$, where $\mathbf u,\mathbf v \in \mathfrak X^\ast$.
For any identity system $\Sigma$, let $\mathrm{var}\,\Sigma$ denote the variety of monoids defined by $\Sigma$.

The following assertion is a specialization for monoids of a well-known universal-algebraic fact (see~\cite[Theorem~II.14.19]{Burris-Sankappanavar-81}).

\begin{lemma}
\label{L: deduction}
An identity $\mathbf u \approx \mathbf v$ holds in $\mathrm{var}\,\Sigma$ if and only if there exists some finite sequence $\mathbf u= \mathbf w_0,\mathbf w_1,\dots,\mathbf w_n= \mathbf v$ of distinct words such that for any $i\in\{0,1,\dots,n-1\}$ there exist the words $\mathbf a_i,\mathbf b_i\in \mathfrak X^\ast$, the endomorphism $\xi_i\in\mathrm{End}(\mathfrak X^\ast)$ and the identity $\mathbf s_i\approx \mathbf t_i\in\Sigma$ for which $\{\mathbf w_i,\mathbf w_{i+1}\}=\{\mathbf a_i\xi_i(\mathbf s_i)\mathbf b_i,\mathbf a_i\xi_i(\mathbf t_i)\mathbf b_i\}$.\qed
\end{lemma}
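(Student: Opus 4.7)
The plan is to recognize this lemma as the monoid-theoretic incarnation of Birkhoff's completeness theorem for equational logic, so I would prove both implications directly rather than invoke the universal-algebraic fact.

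For the ``if'' direction (soundness), suppose a sequence $\mathbf u=\mathbf w_0,\dots,\mathbf w_n=\mathbf v$ as described exists. Fix any monoid $M\in\mathrm{var}\,\Sigma$ and any homomorphism $\varphi\colon\mathfrak X^\ast\to M$. For each $i$, the composition $\varphi\circ\xi_i$ is a homomorphism $\mathfrak X^\ast\to M$, and since $\mathbf s_i\approx\mathbf t_i$ holds in $M$, we get $\varphi(\xi_i(\mathbf s_i))=\varphi(\xi_i(\mathbf t_i))$. Multiplying on the left by $\varphi(\mathbf a_i)$ and on the right by $\varphi(\mathbf b_i)$ yields $\varphi(\mathbf w_i)=\varphi(\mathbf w_{i+1})$, and chaining these equalities gives $\varphi(\mathbf u)=\varphi(\mathbf v)$. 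Hence $\mathbf u\approx\mathbf v$ holds in $M$, and since $M$ was arbitrary, it holds in $\mathrm{var}\,\Sigma$.

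For the ``only if'' direction I would define a binary relation $\sim$ on $\mathfrak X^\ast$ by declaring $\mathbf u\sim\mathbf v$ iff $\mathbf u=\mathbf v$ or a sequence of the kind described in the lemma exists between them, and verify the following four properties. First, $\sim$ is an equivalence relation: reflexivity is built in, symmetry follows by reversing a witnessing sequence, and transitivity follows by concatenating sequences and then collapsing any repetitions $\mathbf w_i=\mathbf w_j$ (excise the intermediate block $\mathbf w_{i+1},\dots,\mathbf w_j$) to restore the ``distinct words'' clause. Second, $\sim$ is a congruence on $\mathfrak X^\ast$: multiplying each $\mathbf w_k$ on the left by $\mathbf c$ and on the right by $\mathbf d$ transforms the witness for $\mathbf u\sim\mathbf v$ into a witness for $\mathbf c\mathbf u\mathbf d\sim\mathbf c\mathbf v\mathbf d$, absorbing $\mathbf c$ into $\mathbf a_i$ and $\mathbf d$ into $\mathbf b_i$. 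Third, $\sim$ is fully invariant: given $\eta\in\mathrm{End}(\mathfrak X^\ast)$, applying $\eta$ termwise to the sequence produces a valid witness for $\eta(\mathbf u)\sim\eta(\mathbf v)$, because $\eta(\mathbf a_i\xi_i(\mathbf s_i)\mathbf b_i)=\eta(\mathbf a_i)\cdot(\eta\circ\xi_i)(\mathbf s_i)\cdot\eta(\mathbf b_i)$, and $\eta\circ\xi_i$ is again an endomorphism of $\mathfrak X^\ast$. Fourth, $\Sigma\subseteq{\sim}$: each identity $\mathbf s\approx\mathbf t\in\Sigma$ is realized by the one-step sequence with $\mathbf a_0=\mathbf b_0=\lambda$ and $\xi_0=\mathrm{id}$.

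From these properties the quotient $F:=\mathfrak X^\ast/{\sim}$ inherits a monoid structure, and full invariance together with $\Sigma\subseteq{\sim}$ says exactly that every identity of $\Sigma$ is satisfied by $F$ under every assignment of variables, so $F\in\mathrm{var}\,\Sigma$. If $\mathbf u\approx\mathbf v$ holds in $\mathrm{var}\,\Sigma$, then it holds in $F$; applying this to the canonical homomorphism $\mathfrak X^\ast\to F$ (which sends each variable to its $\sim$-class) forces $\mathbf u$ and $\mathbf v$ to lie in the same class, i.e.\ $\mathbf u\sim\mathbf v$, which is precisely the desired sequence. The main technical nuisance is the bookkeeping around the ``distinct words'' clause during the transitivity and endomorphism steps, but the collapsing trick described above handles it uniformly.
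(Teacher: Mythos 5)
Your proof is correct, but it takes a genuinely different route from the paper, because the paper contains no argument at all: Lemma~\ref{L: deduction} is stated there as a specialization for monoids of a well-known universal-algebraic fact (Birkhoff's completeness theorem for equational logic) and is simply cited to \cite[Theorem~II.14.19]{Burris-Sankappanavar-81}, with a \qed\ in place of a proof. What you have written is, in effect, a self-contained proof of that cited theorem in the monoid setting: soundness by composing an arbitrary homomorphism with the substitutions $\xi_i$, and completeness by building the syntactic relation $\sim$, checking it is a fully invariant congruence containing $\Sigma$, and separating $\mathbf u$ from $\mathbf v$ in the quotient $\mathfrak X^\ast/{\sim}$ via the canonical projection. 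Your argument holds up at the delicate points: the excision trick does restore the ``distinct words'' clause after concatenation or after applying an endomorphism termwise, and the absorption of outer factors into $\mathbf a_i,\mathbf b_i$ preserves distinctness because the free monoid is cancellative; the only compressed step is the lifting of an arbitrary assignment $\mathfrak X\to\mathfrak X^\ast/{\sim}$ to an endomorphism of $\mathfrak X^\ast$ by choosing representatives, which is standard. What your route buys is self-containedness and, more substantively, a verification of the monoid-specific form of the statement: in the general universal-algebraic version the elementary deduction steps are replacements inside arbitrary translations, and the fact that for monoids these reduce to two-sided multiplication by words $\mathbf a_i,\mathbf b_i$, with the terms of the sequence moreover pairwise distinct, is exactly what your congruence construction checks and what the paper leaves implicit in the word ``specialization.'' What the paper's route buys is brevity and deference to a standard reference.
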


A variety of monoids is called \textit{completely regular} if it consists of \textit{completely regular} monoids, that is, unions of groups.
Let 
$$
\mathbf C = \mathrm{var}\{x^2\approx x^3,\,xy\approx yx\}.
$$
The equivalence of items a) and b) of the following lemma is well known and can be easily verified, while the equivalence of items a) and c) is established in~\cite[Corollary~2.6]{Gusev-Vernikov-18}.  

\begin{lemma}
\label{L: V is comp reg}
For a monoid variety $\mathbf V$, the following are equivalent:
\begin{itemize}
\item[\textup{a)}] $\mathbf V$ is completely regular;
\item[\textup{b)}] $\mathbf V$ satisfies the identity $x\approx x^{1+n}$ for some $n\ge 1$;
\item[\textup{c)}] $\mathbf C\nsubseteq \mathbf V$.\qed
\end{itemize}
\end{lemma}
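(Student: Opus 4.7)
My plan is to prove (a) $\Leftrightarrow$ (b), (b) $\Rightarrow$ (c), and (c) $\Rightarrow$ (b), with the last being the substantive step. For (a) $\Leftrightarrow$ (b) I would argue directly from the definitions. If $\mathbf V$ satisfies $x \approx x^{1+n}$, then for any $a$ in any $M \in \mathbf V$, iteration yields $a^k = a^{k+n}$ for each $k \geq 1$; thus $e := a^n$ is idempotent and $a^{n-1}$ is a two-sided inverse of $a$ in the maximal subgroup $H_e$, so $M$ is a union of groups. Conversely, assuming $\mathbf V$ is completely regular, apply this to the free monoid $F$ of $\mathbf V$ on a single generator $x$. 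Its elements are exactly the powers $x^k$ with $k \geq 0$, and complete regularity places $x$ in some subgroup $H_e \subseteq F$. Since $e$ and the inverse of $x$ in $H_e$ are themselves powers of $x$, a short exponent computation produces $n \geq 1$ with $x = x^{1+n}$ in $F$, hence in $\mathbf V$.

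For (b) $\Rightarrow$ (c) I would argue contrapositively. If $\mathbf C \subseteq \mathbf V$, then the three-element monoid $M := \{1, a, a^2\}$ with $a^3 = a^2$ lies in $\mathbf V$. But in $M$ one has $a^{1+n} = a^2 \neq a$ for every $n \geq 1$, so $\mathbf V$ cannot satisfy any identity of the form $x \approx x^{1+n}$.

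For (c) $\Rightarrow$ (b), the main step, suppose contrapositively that no identity $x \approx x^{1+n}$ holds in $\mathbf V$. Inside the free monoid of $\mathbf V$ on $\mathfrak X$, let $S$ denote the submonoid generated by a single variable $x$. The hypothesis says $x \neq x^k$ in $S$ for every $k \geq 2$, so $1$, $x$, and $x^2$ represent three distinct elements. Commutativity of the monogenic $S$ makes the equivalence relation collapsing all $x^k$ with $k \geq 2$ into one class a monoid congruence $\theta$, and the quotient $S/\theta$ is a three-element monoid whose multiplication table matches that of $M$. Since $S$ is a submonoid of an algebra in $\mathbf V$, we have $S \in \mathbf V$, hence $M \cong S/\theta \in \mathrm{HS}(\mathbf V) \subseteq \mathbf V$. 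A direct coordinatewise embedding of the free $\mathbf C$-algebra on any alphabet into a Cartesian power of $M$ shows $\mathrm{var}(M) = \mathbf C$, yielding $\mathbf C \subseteq \mathbf V$.

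The step I expect to be the main obstacle is (c) $\Rightarrow$ (b): one must identify the correct submonoid $S$ and congruence $\theta$, verify that the quotient's multiplication table matches $M$, and separately confirm that $M$ generates all of $\mathbf C$ rather than merely belonging to it. The other implications reduce to routine calculations in a one-generated semigroup or in the finite monoid $M$.
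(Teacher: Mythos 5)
Your argument is correct, but note that the paper itself offers no proof of this lemma to compare against: it declares the equivalence of a) and b) to be ``well known and easily verified'' and cites Corollary~2.6 of Gusev and Vernikov (\emph{Chain varieties of monoids}) for the equivalence of a) and c). Your self-contained verification therefore supplies exactly what the paper delegates to the literature, via a slightly different decomposition ((a)$\Leftrightarrow$(b), (b)$\Rightarrow$(c), (c)$\Rightarrow$(b) instead of a)$\Leftrightarrow$b) and a)$\Leftrightarrow$c)). The substance is right: the cyclic-group computation gives (a)$\Leftrightarrow$(b); the three-element monoid $M=\{1,a,a^2\}$ with $a^3=a^2$ lies in $\mathbf C$ and violates every identity $x\approx x^{1+n}$, giving (b)$\Rightarrow$(c); and for (c)$\Rightarrow$(b) your quotient of the monogenic submonoid of the $\mathbf V$-free monoid produces $M$ inside $\mathbf V$, after which the separating-coordinates embedding shows $\mathrm{var}(M)=\mathbf C$ (indeed $M$ is precisely the free monogenic monoid of $\mathbf C$). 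What the citation buys the paper is brevity; what your route buys is that the lemma is seen to be entirely elementary. Two spots deserve one extra line each. First, in (b)$\Rightarrow$(a) the phrase ``$a^{n-1}$ is a two-sided inverse of $a$ in $H_e$'' degenerates when $n=1$ (then $a^{n-1}=1$, which need not lie in $H_e$); it is cleaner to say that $\{a,a^2,\dots,a^n\}$ is a group with identity $a^n$ containing $a$. Second, in (c)$\Rightarrow$(b) the collapsing relation is a well-defined partition only if no power $x^k$ with $k\ge 2$ coincides with $1$ or with $x$: coincidence with $x$ is excluded by hypothesis, and $x^k=1$ would yield $x^{k+1}=x$, again contradicting the hypothesis. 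With these one-line patches your proof is complete.
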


A monoid variety is \textit{combinatorial} if all it groups are trivial.
The following statement is well known and can be easily verified.

\begin{lemma}
\label{L: V is combinatorial}
A monoid variety $\mathbf V$  is combinatorial if and only if it satisfies the identity $x^n\approx x^{n+1}$ for some $n\ge1$.\qed
\end{lemma}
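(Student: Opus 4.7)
The plan is to handle the two directions separately. The $(\Leftarrow)$ direction is essentially immediate: if $\mathbf V$ satisfies $x^n \approx x^{n+1}$ and $G$ is any group in $\mathbf V$, then every $g \in G$ satisfies $g^n = g^{n+1}$, and cancelling $g^n$ in $G$ yields $g = 1$. Hence every group in $\mathbf V$ is trivial, so $\mathbf V$ is combinatorial.

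For the $(\Rightarrow)$ direction, I would study the one-generated free monoid $F$ of $\mathbf V$. As a monogenic monoid, $F$ is a homomorphic image of $(\mathbb{N}, +, 0)$, so it is either isomorphic to $\mathbb{N}$ or finite of the ``lasso'' shape $\{1, x, \dots, x^{n+k-1}\}$ with $x^{n+k} = x^n$ for some minimal index $n \ge 0$ and period $k \ge 1$. In the latter case, $\{x^n, x^{n+1}, \dots, x^{n+k-1}\}$ forms a cyclic subgroup of $F$ of order $k$; since $F \in \mathbf V$ and $\mathbf V$ is combinatorial, we must have $k = 1$, which means $x^n \approx x^{n+1}$ holds in $F$, and hence in $\mathbf V$ (as identities in one variable that hold in the one-generated free object hold throughout the variety). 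If this forces $n = 0$, i.e.\ $F$ is trivial, one passes trivially to $x \approx x^2$.

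The only step requiring care is excluding $F \cong \mathbb{N}$. My approach is to observe that if $\mathbb{N} \in \mathbf V$ then $\mathbb{N} \times \mathbb{N} \in \mathbf V$, and the map $(a,b) \mapsto a - b$ is a surjective monoid homomorphism onto the nontrivial group $(\mathbb{Z}, +, 0)$; hence $\mathbb{Z} \in \mathbf V$, contradicting combinatoriality. This product-then-quotient trick is the one slightly non-obvious point; everything else, including the identification of the cyclic part of $F$ as a subgroup, is entirely routine and explains why the author calls the lemma well known and easily verifiable.
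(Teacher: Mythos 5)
Your proof is correct. Note that the paper gives no proof of this lemma at all---it is stated as well known and easily verified, with the proof omitted---and your argument (cancellation for the easy direction; for the converse, the structure of the monogenic relatively free monoid, whose cyclic kernel is a group member of $\mathbf V$, together with the exclusion of $\mathbb{N}$ via the surjection $\mathbb{N}\times\mathbb{N}\to\mathbb{Z}$, $(a,b)\mapsto a-b$) is precisely the kind of routine verification the author has in mind, so there is nothing to compare against and nothing to fix.
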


Let
$$ 
\mathbf{LRB}=\mathrm{var}\{xy \approx xyx\} \ \text{ and } \ \mathbf{RRB}=\mathrm{var}\{xy \approx yxy\}.
$$
The following statement can be easily deduced from Proposition~4.7 in~\cite{Wismath-86}.

\begin{lemma}
\label{L: nsubseteq LRB}
If a variety $\mathbf V$ of idempotent monoids does not contain $\mathbf{LRB}$, then $\mathbf V$ coincides with one of the varieties $\mathbf T$, $\mathbf{SL}$ or $\mathbf{RRB}$.\qed
\end{lemma}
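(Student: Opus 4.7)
The plan is to invoke the explicit description of the lattice of varieties of idempotent monoids supplied by Proposition~4.7 of~\cite{Wismath-86}. That lattice is finite, and the statement of the lemma amounts to identifying the order ideal $\{\mathbf V:\mathbf{LRB}\nsubseteq\mathbf V\}$ inside it and checking that this ideal consists of exactly the three varieties $\mathbf T$, $\mathbf{SL}$ and $\mathbf{RRB}$.

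First I would verify that none of these three varieties contains $\mathbf{LRB}$. Trivially $\mathbf{LRB}\nsubseteq\mathbf T$. Since $\mathbf{SL}$ satisfies $xy\approx yx$ while $\mathbf{LRB}$ does not (the elements $xy$ and $yx$ of the free $\mathbf{LRB}$-monoid on two generators are distinct), we have $\mathbf{LRB}\nsubseteq\mathbf{SL}$. For $\mathbf{RRB}$ one computes in the free $\mathbf{RRB}$-monoid on two generators that the non-identity elements are $x,y,xy,yx$, with the defining relation $xy\approx yxy$ forcing $xyx=yx\ne xy$; hence the identity $xy\approx xyx$ of $\mathbf{LRB}$ fails in $\mathbf{RRB}$, so $\mathbf{LRB}\nsubseteq\mathbf{RRB}$.

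The substantive step is to eliminate every other variety of idempotent monoids appearing in Wismath's list. From the Hasse diagram supplied by Proposition~4.7 one reads off the finitely many varieties covering $\mathbf T$, $\mathbf{SL}$, or $\mathbf{RRB}$ in the lattice of band-monoid varieties; for each such cover one inspects a defining identity to conclude that $\mathbf{LRB}$ is included in it. Any larger variety then contains $\mathbf{LRB}$ automatically by monotonicity of the subvariety relation.

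The main (and essentially only) obstacle is having Proposition~4.7 at hand in sufficient detail to enumerate these idempotent monoid varieties and read off their lattice order with respect to $\mathbf{LRB}$. Once that explicit description is available, the argument reduces to the finite case-check described above, which is why the paper advertises the deduction as easy.
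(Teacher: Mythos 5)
Your proposal follows the same route as the paper, which offers no argument beyond citing Proposition~4.7 of Wismath's paper: one reads off from that description of the lattice of band monoid varieties that $\mathbf T$, $\mathbf{SL}$ and $\mathbf{RRB}$ omit $\mathbf{LRB}$ while every other variety lies above a cover of $\mathbf{SL}$ or of $\mathbf{RRB}$ that already contains $\mathbf{LRB}$. One small correction: the lattice of varieties of band monoids is countably infinite rather than finite, but this does not affect your argument, since the check is confined to the finitely many covers of the three listed varieties and monotonicity handles everything above them.
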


For any word $\mathbf w$, we denote by $\mathrm{ini}(\mathbf w)$ the word obtained from $\mathbf w$ by retaining only the first occurrence of each variable.
The following statement is well known and can be easily verified.

\begin{lemma}
\label{L: word problem LRB}
An identity $\mathbf u \approx \mathbf v$ is satisfied by $\mathbf{LRB}$ if and only if $\mathrm{ini}(\mathbf u) = \mathrm{ini}(\mathbf v)$.\qed
\end{lemma}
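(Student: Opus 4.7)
I would prove the two directions separately: the sufficiency (``if'') by showing that every word can be rewritten to its initial form using $xy \approx xyx$, and the necessity (``only if'') by exhibiting an explicit monoid in $\mathbf{LRB}$ whose evaluation map computes $\mathrm{ini}$.

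For the sufficiency, the key intermediate claim is that $\mathbf{w} \approx \mathrm{ini}(\mathbf{w})$ is a consequence of $xy \approx xyx$ for every word $\mathbf{w}$, which I would prove by induction on the number of letter occurrences in $\mathbf{w}$ lying beyond the first occurrence of their variable. If this count is zero then $\mathbf{w} = \mathrm{ini}(\mathbf{w})$. Otherwise some variable $x$ occurs at least twice in $\mathbf{w}$, so we may write $\mathbf{w} = \mathbf{a}\,x\,\mathbf{b}\,x\,\mathbf{c}$ with the two displayed occurrences being the first two occurrences of $x$ in $\mathbf{w}$. Substituting $y \mapsto \mathbf{b}$ into $xy \approx xyx$ yields $x\mathbf{b} \approx x\mathbf{b}x$, and one application of Lemma~\ref{L: deduction} (with $\mathbf{a}_i = \mathbf{a}$, $\mathbf{b}_i = \mathbf{c}$, the endomorphism sending $x \mapsto x$ and $y \mapsto \mathbf{b}$, and the defining identity itself) rewrites $\mathbf{w}$ to $\mathbf{a}\,x\,\mathbf{b}\,\mathbf{c}$, strictly decreasing the count. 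The claim then yields $\mathbf{u} \approx \mathrm{ini}(\mathbf{u}) = \mathrm{ini}(\mathbf{v}) \approx \mathbf{v}$.

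For the necessity, let $M$ denote the set of all repetition-free words over $\mathfrak{X}$ equipped with the operation $\mathbf{p}\cdot\mathbf{q} := \mathrm{ini}(\mathbf{p}\mathbf{q})$ and identity element $\lambda$. Associativity reduces to the easily checked equality $\mathrm{ini}(\mathrm{ini}(\mathbf{s})\mathbf{t}) = \mathrm{ini}(\mathbf{s}\,\mathrm{ini}(\mathbf{t})) = \mathrm{ini}(\mathbf{s}\mathbf{t})$. Since every repetition-free word $\mathbf{p}$ appears as a prefix of $\mathbf{p}\cdot\mathbf{q}$, we get $(\mathbf{p}\cdot\mathbf{q})\cdot\mathbf{p} = \mathbf{p}\cdot\mathbf{q}$, so $M$ satisfies $xy \approx xyx$ and hence lies in $\mathbf{LRB}$. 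The homomorphism $\mathfrak{X}^\ast \to M$ sending each variable to itself then maps an arbitrary word $\mathbf{w}$ to $\mathrm{ini}(\mathbf{w})$, so any identity $\mathbf{u} \approx \mathbf{v}$ valid in $\mathbf{LRB}$ forces $\mathrm{ini}(\mathbf{u}) = \mathrm{ini}(\mathbf{v})$. Neither direction presents a serious obstacle; the main care is in checking that each deduction step really fits the template of Lemma~\ref{L: deduction} and that $M$ is a well-defined monoid, both of which are routine.
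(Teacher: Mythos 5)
Your argument is correct, but there is nothing in the paper to compare it against: the lemma is stated there as ``well known and can be easily verified,'' with the proof omitted (the qed-box marks it as unproved). Your proposal supplies the missing argument, and both halves are sound. For sufficiency, the induction on the number of non-first occurrences works: each rewriting step $\mathbf a x\mathbf b x\mathbf c \to \mathbf a x\mathbf b\mathbf c$ is a legitimate instance of Lemma~\ref{L: deduction} with the substitution $x\mapsto x$, $y\mapsto\mathbf b$, and it leaves $\mathrm{ini}$ unchanged since only a non-first occurrence is deleted --- you use this last fact silently when applying the induction hypothesis, but it is immediate. For necessity, your $M$ is exactly the free left regular band monoid on $\mathfrak X$: the associativity computation via $\mathrm{ini}(\mathrm{ini}(\mathbf s)\mathbf t)=\mathrm{ini}(\mathbf s\,\mathrm{ini}(\mathbf t))=\mathrm{ini}(\mathbf s\mathbf t)$ is right, membership in $\mathbf{LRB}$ follows as you say, and evaluating each variable at itself computes $\mathrm{ini}$. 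A slightly shorter route for that direction, staying entirely inside equational logic, is to use the ``only if'' part of Lemma~\ref{L: deduction}: an elementary deduction step from $xy\approx xyx$ inserts or deletes a block $\xi(x)$ every letter of which already occurs to its left, so each step preserves $\mathrm{ini}$, and hence $\mathrm{ini}$ is constant on $\mathrm{FIC}(\mathbf{LRB})$-classes. Your model-theoretic version costs a little more verification but buys a concrete witness (the free object in $\mathbf{LRB}$); either way the lemma is fully established.
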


\section{Proof of the theorem}
\label{Sec: proof of theorem}

To prove the theorem, we need some definitions, notation and one auxiliary lemma.
The \textit{content} of a word $\mathbf w\in\mathfrak X^\ast$, that is, the set of all variables occurring in $\mathbf w$ is denoted by $\mathrm{con}(\mathbf w)$, while the length of $\mathbf w$ is denoted by $\ell(\mathbf w)$.
For a word $\mathbf w\in\mathfrak X^\ast$ and a variable $x\in\mathfrak X$, let $\mathrm{occ}_x(\mathbf w)$ denote the number of occurrences of $x$ in $\mathbf w$.
For any variety $\mathbf V$ of monoids, let $\mathrm{FIC}(\mathbf V)$ denote the fully invariant congruence on $\mathfrak X^\ast$ corresponding to $\mathbf V$.
A word $\mathbf w$ is an \textit{isoterm} for a variety $\mathbf V$ if the $\mathrm{FIC}(\mathbf V)$-class of $\mathbf w$ is singleton.
A monoid variety is \textit{proper} if it is different from $\mathbf{MON}$.

\begin{lemma}
\label{L: up LRB is not lmod}
Let $\mathbf V$ be a proper monoid variety. 
If $\mathbf{LRB}\subseteq\mathbf V$, then $\mathbf V$ is not a lower-modular element in $\mathbb{MON}$.
\end{lemma}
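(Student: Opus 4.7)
The plan is to exhibit explicit monoid varieties $\mathbf{Y}$ and $\mathbf{Z}$ with $\mathbf{V}\subseteq\mathbf{Y}$ such that
\[
\mathbf{V}\vee(\mathbf{Y}\wedge\mathbf{Z})\subsetneq\mathbf{Y}\wedge(\mathbf{V}\vee\mathbf{Z}).
\]
Since $\mathbf{V}\vee(\mathbf{Y}\wedge\mathbf{Z})\subseteq\mathbf{Y}\wedge(\mathbf{V}\vee\mathbf{Z})$ always holds when $\mathbf{V}\subseteq\mathbf{Y}$, the task reduces to producing a monoid that lies in the right-hand side but not in the left-hand side.

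First I would extract structural data from the hypotheses. Because $\mathbf{V}$ is proper, it satisfies some identity $\mathbf{u}\approx\mathbf{v}$ with $\mathbf{u}\ne\mathbf{v}$. Since $\mathbf{LRB}\subseteq\mathbf{V}$, Lemma~\ref{L: word problem LRB} forces $\mathrm{ini}(\mathbf{u})=\mathrm{ini}(\mathbf{v})$, so the two words agree on the ordered list of first occurrences of variables and differ only at later occurrences of repeated variables. I would then split into subcases according to whether $\mathbf{V}$ is completely regular (equivalently, by Lemma~\ref{L: V is comp reg}, whether $\mathbf{C}\subseteq\mathbf{V}$), because the ``extra'' identities that $\mathbf{V}$ carries beyond those of $\mathbf{LRB}$ behave differently in the two regimes.

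Within each regime, I would pick $\mathbf{Z}$ to be a small variety---generated by a carefully chosen finite monoid $M$ witnessing the failure of $\mathbf{u}\approx\mathbf{v}$, or defined by a single identity tailored to this witness---such that $M\in\mathbf{V}\vee\mathbf{Z}$. Then I would define $\mathbf{Y}$ as an intermediate extension of $\mathbf{V}$ for which $M\in\mathbf{Y}$, so $M$ belongs to $\mathbf{Y}\wedge(\mathbf{V}\vee\mathbf{Z})$, while simultaneously $\mathbf{Y}\wedge\mathbf{Z}\subseteq\mathbf{V}$, forcing $\mathbf{V}\vee(\mathbf{Y}\wedge\mathbf{Z})=\mathbf{V}$. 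Since $M\notin\mathbf{V}$, the two sides are distinct. The verification of membership/non-membership of $M$ in each variety reduces to identity deductions via Lemma~\ref{L: deduction}, combined with isoterm checks anchored in the structure inherited from $\mathbf{LRB}\subseteq\mathbf{V}$.

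The main obstacle is engineering the asymmetry between the join $\mathbf{V}\vee\mathbf{Z}$ and the meet $\mathbf{Y}\wedge\mathbf{Z}$: the combinatorial content of $\mathbf{Z}$ must be rich enough that joining it with $\mathbf{V}$ produces a monoid outside $\mathbf{V}$, yet poor enough that intersecting it with $\mathbf{Y}$ collapses it back inside $\mathbf{V}$. This balance is what forces the completely-regular versus non-completely-regular dichotomy and dictates how the defining identities of $\mathbf{Y}$ and $\mathbf{Z}$ must be adapted to the particular identity $\mathbf{u}\approx\mathbf{v}$ furnished by $\mathbf{V}$; once the right candidates are in hand, the remaining identity manipulations are routine applications of Lemma~\ref{L: deduction}.
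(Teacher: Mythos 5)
Your proposal sets up the right target (a witness triple violating $\mathbf V\vee(\mathbf Y\wedge\mathbf Z)=\mathbf Y\wedge(\mathbf V\vee\mathbf Z)$ with $\mathbf V\subseteq\mathbf Y$) and the right starting observation (a non-trivial identity $\mathbf u\approx\mathbf v$ of $\mathbf V$ with $\mathrm{ini}(\mathbf u)=\mathrm{ini}(\mathbf v)$, forced by $\mathbf{LRB}\subseteq\mathbf V$ and Lemma~\ref{L: word problem LRB}). But everything after that is a description of what a proof would have to accomplish, not a proof: you never produce the varieties $\mathbf Y$ and $\mathbf Z$, the monoid $M$, or any verification, and the construction is precisely where all the work lies. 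Worse, your first concrete suggestion is internally inconsistent: if $\mathbf Z=\mathrm{var}(M)$ with $M\notin\mathbf V$, and you also arrange $M\in\mathbf Y$ and $\mathbf Y\wedge\mathbf Z\subseteq\mathbf V$, then $M\in\mathbf Y\wedge\mathbf Z\subseteq\mathbf V$, a contradiction. The proposed case split on complete regularity is also not needed here; that dichotomy belongs to the proof of the main theorem, where this lemma is applied, while the lemma itself admits a uniform argument.

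For comparison, the paper's proof proceeds equationally rather than by exhibiting a separating monoid. It first normalizes $\mathbf u\approx\mathbf v$ so that $\mathrm{con}(\mathbf u)=\mathrm{con}(\mathbf v)=\{x,y\}$, every occurrence count equals some $n\ge 2$, and $\mathrm{ini}(\mathbf u)=\mathrm{ini}(\mathbf v)=xy$; it then lets $\mathbf u',\mathbf v'$ be the images of $\mathbf u,\mathbf v$ under swapping $x$ and $y$, and takes $\mathbf X=\mathrm{var}\{\mathbf ux\approx\mathbf u'x,\ \mathbf vx\approx\mathbf v'x\}$ and $\mathbf Y=\mathrm{var}\{\mathbf ux\approx\mathbf vx\}$, so that $\mathbf V\subseteq\mathbf Y$ holds automatically. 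A length-and-occurrence counting argument via Lemma~\ref{L: deduction} shows that the relevant $\mathrm{FIC}(\mathbf X)$-classes are exactly the two-element sets $\{\mathbf ux,\mathbf u'x\}$ and $\{\mathbf vx,\mathbf v'x\}$; combined with $\mathbf{LRB}\subseteq\mathbf V$ and Lemma~\ref{L: word problem LRB} (which separates $\mathbf ux$ from $\mathbf u'x$ because their $\mathrm{ini}$-values differ), this makes $\mathbf u'x$ an isoterm for $\mathbf Y\wedge(\mathbf V\vee\mathbf X)$, whereas $\mathbf V\vee(\mathbf Y\wedge\mathbf X)$ satisfies $\mathbf u'x\approx\mathbf v'x$. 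Some explicit construction and verification of this kind is indispensable; as written, your text is a research plan rather than a proof.
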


\begin{proof}
Since $\mathbf V$ is a proper monoid variety, $\mathbf V$ satisfies some non-trivial identity $\mathbf u_1 \approx \mathbf v_1$.
There are distinct variables $x$ and $y$ such that the identity obtained from $\mathbf u_1 \approx \mathbf v_1$ by retaining only the variables $x$ and $y$ is non-trivial.
This allows us to assume that the words $\mathbf u_1$ and $\mathbf v_1$ depend on the variables $x$ and $y$ only. 
Clearly, $\mathbf V$ satisfies the identities 
$$
x^{\mathrm{occ}_x(\mathbf u_1)+1}\approx x^{\mathrm{occ}_x(\mathbf v_1)+1} \ \text{ and } \ x^{\mathrm{occ}_y(\mathbf u_1)+1}\approx x^{\mathrm{occ}_y(\mathbf v_1)+1}
$$ 
and so the identity
$$
\mathbf u_1x^{\mathrm{occ}_x(\mathbf v_1)+1}y^{\mathrm{occ}_y(\mathbf v_1)+1}\approx \mathbf v_1x^{\mathrm{occ}_x(\mathbf u_1)+1}y^{\mathrm{occ}_y(\mathbf u_1)+1}.
$$
One can multiply both sides of the last identity on the left by the variables $x$ and $y$ and obtain the identity both sides of which contain all these variables exactly $n$ times for some $n\ge 2$. 
Therefore, $\mathbf V$ satisfies a non-trivial identity $\mathbf u \approx \mathbf v$ such that $\mathrm{con}(\mathbf u)=\mathrm{con}(\mathbf v)=\{x,y\}$ and $\mathrm{occ}_x(\mathbf u) = \mathrm{occ}_x(\mathbf v) = \mathrm{occ}_y(\mathbf u) = \mathrm{occ}_y(\mathbf v)=n$.

In view of the inclusion $\mathbf{LRB}\subseteq\mathbf V$ and Lemma~\ref{L: word problem LRB}, we may assume without any loss that $\mathrm{ini}(\mathbf u) = \mathrm{ini}(\mathbf v)=xy$.
Let $\mathbf u'$ and $\mathbf v'$ be words obtained from $\mathbf u$ and $\mathbf v$, respectively, by performing the substitution $(x,y) \mapsto (y,x)$.
Then $\mathrm{ini}(\mathbf u') = \mathrm{ini}(\mathbf v')=yx$.

Let
$$
\mathbf X = \mathrm{var}\{\mathbf ux\approx \mathbf u'x,\,\mathbf vx\approx \mathbf v'x\}\ \text{ and }\ \mathbf Y = \mathrm{var}\{\mathbf ux\approx \mathbf vx\}.
$$
Let us show that the set $\{\mathbf ux,\mathbf u'x\}$ forms a $\mathrm{FIC}(\mathbf X)$-class. 
To do this, it suffices to verify that if $\mathbf X$ satisfies $\mathbf p \approx \mathbf q$ and $\mathbf p \in \{\mathbf ux,\mathbf u'x\}$, then $\mathbf q \in \{\mathbf ux,\mathbf u'x\}$.
In view of Lemma~\ref{L: deduction}, it suffices to consider the case when $(\mathbf p,\mathbf q)=(\mathbf a \xi(\mathbf s)\mathbf b,\mathbf a \xi(\mathbf t)\mathbf b)$, where $\mathbf a,\mathbf b \in \mathfrak X^\ast$, $\xi\in\mathrm{End}(\mathfrak X^\ast)$ and either $\{\mathbf s,\mathbf t\}=\{\mathbf ux,\mathbf u'x\}$ or $\{\mathbf s,\mathbf t\}=\{\mathbf vx,\mathbf v'x\}$. 
Evidently, if $\xi(x)=\lambda$ or $\xi(y)=\lambda$, then $\xi(\mathbf s)=\xi(\mathbf t)$ and so $\mathbf q=\mathbf p \in\{\mathbf ux,\mathbf u'x\}$. 
Therefore, we may assume that the words $\xi(x)$ and $\xi(y)$ are non-empty.
Then
$$
\ell(\xi(\mathbf s))\ge\ell(\mathbf s)=2n+1=\ell(\mathbf p).
$$
Hence $\xi(x)$ and $\xi(y)$ are variables and $\mathbf a=\mathbf b=\lambda$.
Taking into account that 
$$
\mathrm{occ}_x(\mathbf p)=\mathrm{occ}_x(\mathbf s)=n+1 \ \text{ and } \ \mathrm{occ}_y(\mathbf p)=\mathrm{occ}_y(\mathbf s)=n,
$$
we have that $\xi(x)=x$ and $\xi(y)=y$.
This is only possible when $\mathbf s=\mathbf p$, whence $\{\mathbf s,\mathbf t\}=\{\mathbf ux,\mathbf u'x\}$ and so $\mathbf q \in\{\mathbf ux,\mathbf u'x\}$.

Thus, the set $\{\mathbf ux,\mathbf u'x\}$ forms a $\mathrm{FIC}(\mathbf X)$-class. 
By similar arguments we can show that $\{\mathbf vx,\mathbf v'x\}$ is a $\mathrm{FIC}(\mathbf X)$-class and $\mathbf u'x$ is an isoterm for $\mathbf Y$.
These facts, the inclusion $\mathbf{LRB}\subseteq\mathbf V$ and Lemma~\ref{L: word problem LRB} imply that the words $\mathbf ux, \mathbf u'x,\mathbf vx, \mathbf v'x$ are isoterms for $\mathbf V\vee\mathbf X$. 
Then $\mathbf u'x$ must be an isoterm for $\mathbf Y\wedge(\mathbf V\vee\mathbf X)$. 
Obviously, the variety $\mathbf Y\wedge\mathbf X$ satisfies the identity $\mathbf u'x\approx\mathbf v'x$. 
Clearly, this identity is also satisfied by the variety $\mathbf V$.
Therefore, $\mathbf V\vee(\mathbf Y\wedge\mathbf X)$ satisfies $\mathbf u'x\approx\mathbf v'x$.
Since $\mathbf V\subseteq \mathbf Y$, this implies that
$$
\mathbf V\vee(\mathbf Y\wedge\mathbf X)\subset \mathbf Y\wedge(\mathbf V\vee\mathbf X).
$$
Therefore, the variety $\mathbf V$ is not a lower-modular element of the lattice $\mathbb{MON}$, and we are done.
\end{proof}

\begin{proof}[Proof of the theorem]
As we have noted in the introduction, implications (iv)~$\Leftrightarrow$~(v) are proved in~\cite[Theorem~1.1]{Gusev-18}, implications (iii)~$\Leftrightarrow$~(v) are verified in~\cite[Theorem~1]{Gusev-20}, implication (iii)~$\Rightarrow$~(ii) follows from~\cite[Theorem~253]{Gratzer-11}, while implication (ii)~$\Rightarrow$~(i) is obvious. 
Thus, it remains to verify implication (i) $\Rightarrow$ (v).

Let $\mathbf V$ be a proper monoid variety, which is a lower-modular element in $\mathbb{MON}$. 
Then $\mathbf V$ is \textit{periodic}, i.e., it consists of periodic monoids by~\cite[Lemma~1]{Gusev-20}.
It is well known and can be easily verified that any periodic variety satisfies the identity $x^n\approx x^{n+m}$ for some $n,m\ge 1$. 
Assume that $n$ and $m$ are the least numbers such that $x^n\approx x^{n+m}$ holds in $\mathbf V$. 
Two cases are possible.

\smallskip

\textit{Case }1: $\mathbf V$ is completely regular. 
Then $n=1$ by Lemma~\ref{L: V is comp reg}. 

Suppose that $\mathbf V$ contains a non-trivial group.  
Then $m>1$ by Lemma~\ref{L: V is combinatorial}.
Put
$$
\mathbf u_1 = x^{4m+1}yx^{m+1},\ \mathbf u_2 = x^{2m+2}yx^{3m+1},\ \mathbf v_1=x^{3m+1}yx^{2m+1},\ \mathbf v_2=x^{m+2}yx^{4m+1}.
$$
Let
$$
\mathbf X = \mathrm{var}\{\mathbf u_1 \approx \mathbf u_2,\,\mathbf v_1 \approx \mathbf v_2\}\ \text{ and }\ \mathbf Y = \mathrm{var}\{\mathbf u_2 \approx \mathbf v_2\}.
$$
Let us show that the set $\{\mathbf u_1,\mathbf u_2\}$ forms a $\mathrm{FIC}(\mathbf X)$-class. 
To do this, it suffices to verify that if $\mathbf X$ satisfies $\mathbf p \approx \mathbf q$ and $\mathbf p \in \{\mathbf u_1,\mathbf u_2\}$, then $\mathbf q \in \{\mathbf u_1,\mathbf u_2\}$.
In view of Lemma~\ref{L: deduction}, it suffices to consider the case when $(\mathbf p,\mathbf q)=(\mathbf a \xi(\mathbf s)\mathbf b,\mathbf a \xi(\mathbf t)\mathbf b)$, where $\mathbf a,\mathbf b \in \mathfrak X^\ast$, $\xi\in\mathrm{End}(\mathfrak X^\ast)$ and either $\{\mathbf s,\mathbf t\}=\{\mathbf u_1,\mathbf u_2\}$ or $\{\mathbf s,\mathbf t\}=\{\mathbf v_1,\mathbf v_2\}$. 
Obviously, if $\xi(x)=\lambda$, then $\xi(\mathbf s)=\xi(\mathbf t)=\xi(y)$ and so $\mathbf q=\mathbf p \in\{\mathbf u_1,\mathbf u_2\}$. 
If $\xi(y)=\lambda$ and $\xi(x)\ne\lambda$, then $\{\xi(\mathbf s),\xi(\mathbf t)\}=\{(\xi(x))^{5m+2},(\xi(x))^{5m+3}\}$.
In either case, $(\xi(x))^{5m+2}$ is a subword of $\xi(\mathbf s)$.
But this is impossible because the words $\mathbf u_1$ and $\mathbf u_2$ do not contain any subword that is the $(5m+2)$th power of a non-empty word.
Therefore, we may further assume that the words $\xi(x)$ and $\xi(y)$ are non-empty.
Then
$$
\ell(\xi(\mathbf s))\ge\ell(\mathbf s)\ge5m+3.
$$
Hence $\xi(x)$ is a variable and, moreover, $\xi(x)=x$.
If $y \notin\mathrm{con}(\xi(y))$, then $\mathrm{con}(\xi(y))=\{x\}$ and so $(\xi(x))^{5m+2}$ is a subword of $\xi(\mathbf s)$ contradicting the above.
Thus, $y \in\mathrm{con}(\xi(y))$.
It is easy to see that this is only possible when $\xi(y)=y$ and $\mathbf p=\xi(\mathbf s)=\mathbf s$.
Hence $\mathbf q \in\{\mathbf u_1,\mathbf u_2\}$.

Therefore, $\{\mathbf u_1,\mathbf u_2\}$ forms a $\mathrm{FIC}(\mathbf X)$-class. 
By similar arguments we can show that $\{\mathbf v_1,\mathbf v_2\}$ is a $\mathrm{FIC}(\mathbf X)$-class and $\mathbf u_1$ is an isoterm for $\mathbf Y$.
Note that $\mathbf V$ violates the identities $\mathbf u_1\approx\mathbf u_2$ and $\mathbf v_1\approx\mathbf v_2$ because any variety satisfying one of these identities must satisfy the identity $x^{5m+3} \approx x^{5m+4}$ and so must be combinatorial by Lemma~\ref{L: V is combinatorial}.
In view of the above, this implies that the words $\mathbf u_1, \mathbf u_2,\mathbf v_1, \mathbf v_2$ are isoterms for $\mathbf V\vee\mathbf X$. 
Then $\mathbf u_1$ must be an isoterm for $\mathbf Y\wedge(\mathbf V\vee\mathbf X)$. 
Obviously, the variety $\mathbf Y\wedge\mathbf X$ satisfies the identity $\mathbf u_1\approx\mathbf v_1$. 
Clearly, this identity is also satisfied by the variety $\mathbf V$ because it is a consequence of the identity $x\approx x^{m+1}$.
Therefore, $\mathbf V\vee(\mathbf Y\wedge\mathbf X)$ satisfies $\mathbf u_1\approx\mathbf v_1$.
Since $\mathbf V\subseteq \mathbf Y$, it follows that
$$
\mathbf V\vee(\mathbf Y\wedge\mathbf X)\subset \mathbf Y\wedge(\mathbf V\vee\mathbf X).
$$
Thus, $\mathbf V$ is not a lower-modular element of the lattice $\mathbb{MON}$. 
This contradicts the assumption that $\mathbf V$ contains a non-trivial group.

So, it remains to consider the case when the variety $\mathbf V$ is combinatorial. 
Then $\mathbf V$ is an idempotent variety because every combinatorial completely regular variety consists of idempotent monoids.
It follows from Lemma~\ref{L: up LRB is not lmod} and the dual to it that $\mathbf{LRB},\mathbf{RRB}\nsubseteq \mathbf V$.
Then Lemma~\ref{L: nsubseteq LRB} and the dual to it imply that $\mathbf V$ coincides with one of the varieties $\mathbf{SL}$ or $\mathbf T$, and we are done.

\smallskip

\textit{Case }2: $\mathbf V$ is not completely regular. 
Then $\mathbf C\subseteq\mathbf V$ by Lemma~\ref{L: V is comp reg}.
Lemma~\ref{L: up LRB is not lmod} allows us to assume that $\mathbf{LRB}\nsubseteq \mathbf V$.

According to Lemma~\ref{L: word problem LRB}, $\mathbf V$ satisfies some identity $\mathbf u \approx \mathbf v$ with $\mathrm{ini}(\mathbf u)\ne\mathrm{ini}(\mathbf v)$. 
As in the proof of Lemma~\ref{L: up LRB is not lmod}, one can choose the identity $\mathbf u \approx \mathbf v$ so that $\mathrm{con}(\mathbf u)=\mathrm{con}(\mathbf v)=\{x,y\}$ and $\mathrm{occ}_x(\mathbf u) = \mathrm{occ}_x(\mathbf v) = \mathrm{occ}_y(\mathbf u) = \mathrm{occ}_y(\mathbf v)=k$ for some $k\ge 2$ and, moreover, $x^k$ and $y^k$ are not subwords of $\mathbf u$ and $\mathbf v$.

Let 
$$
\mathbf E=\mathrm{var}\{x^2\approx x^3,\,x^2y\approx xyx,\,x^2y^2\approx y^2x^2\}.
$$
Suppose that $\mathbf E\nsubseteq \mathbf V$.
In is verified in~\cite[Proposition~4.1]{Lee-12} that $\mathbf E\subset\mathbf C\vee\mathbf{LRB}$.
This fact and the inclusion $\mathbf C\subseteq\mathbf V$ imply that
$$
(\mathbf V\vee\mathbf E)\wedge(\mathbf V\vee\mathbf{LRB})=\mathbf V\vee\mathbf E.
$$
Evidently, $\mathbf u\approx \mathbf v$ holds in $\mathbf E$. 
It follows that $\mathbf V\vee\mathbf E$ satisfies $\mathbf u\approx \mathbf v$.
Hence $\mathbf{LRB}\nsubseteq\mathbf V\vee\mathbf E$ by Lemma~\ref{L: word problem LRB} and the fact that $\mathrm{ini}(\mathbf u)\ne\mathrm{ini}(\mathbf v)$.
This fact, the evident inclusion $\mathbf{SL}\subset\mathbf V\vee\mathbf E$ and Lemma~\ref{L: nsubseteq LRB} imply that
$$
\mathbf V\vee((\mathbf V\vee\mathbf E)\wedge\mathbf{LRB})=\mathbf V\vee\mathbf{SL}=\mathbf V,
$$
contradicting the fact that $\mathbf V$ is a lower-modular element in $\mathbb{MON}$.

Thus, it remains to consider the case when $\mathbf E\subseteq \mathbf V$.
Let
$$
\mathbf X = \mathrm{var}\{xt\mathbf u\approx tx\mathbf u,\,xt\mathbf v\approx tx\mathbf v\}\ \text{ and }\ \mathbf Y = \mathrm{var}\{tx\mathbf u\approx tx\mathbf v\}.
$$
Let us show that the set $\{xt\mathbf u,tx\mathbf u\}$ forms a $\mathrm{FIC}(\mathbf X)$-class. 
It suffices to verify that if $\mathbf X$ satisfies $\mathbf p \approx \mathbf q$ and $\mathbf p \in \{xt\mathbf u,tx\mathbf u\}$, then $\mathbf q \in \{xt\mathbf u,tx\mathbf u\}$.
In view of Lemma~\ref{L: deduction}, it suffices to consider the case when $(\mathbf p,\mathbf q)=(\mathbf a \xi(\mathbf s)\mathbf b,\mathbf a \xi(\mathbf t)\mathbf b)$, where $\mathbf a,\mathbf b \in \mathfrak X^\ast$, $\xi\in\mathrm{End}(\mathfrak X^\ast)$ and either $\{\mathbf s,\mathbf t\}=\{xt\mathbf u,tx\mathbf u\}$ or $\{\mathbf s,\mathbf t\}=\{xt\mathbf v,tx\mathbf v\}$. 
Obviously, if $\xi$ maps one of the variables $x$ or $t$ to the empty word, then $\xi(\mathbf s)=\xi(\mathbf t)$ and so $\mathbf q=\mathbf p \in\{xt\mathbf u,tx\mathbf u\}$. 
Therefore, we may assume that the words $\xi(x)$ and $\xi(t)$ are non-empty.
If $\xi(y)=\lambda$, then $\{\xi(\mathbf s),\xi(\mathbf t)\}=\{\xi(x)\xi(t)(\xi(x))^k,\xi(t)(\xi(x))^{k+1}\}$.
Since $\xi(x)\ne \lambda$, $\mathrm{occ}_y(\mathbf p)=k$ and $\mathrm{occ}_t(\mathbf p)=1$, this is only possible when $\xi(x)=x$.
However, the last equality contradicts the assumption that the word $x^k$ is not a subword of $\mathbf u$. 
Thus, $\xi(y)\ne \lambda$.
Then
$$
\ell(\xi(\mathbf s))\ge\ell(\mathbf s)=2k+2=\ell(\mathbf p).
$$
Hence $\xi(x)$, $\xi(y)$ and $\xi(t)$ are variables and $\mathbf a=\mathbf b=\lambda$.
Then $\xi(x)=x$, $\xi(y)=y$ and $\xi(t)=t$ because
$$
\mathrm{occ}_x(\mathbf p)=\mathrm{occ}_x(\mathbf s)=k+1, \ \mathrm{occ}_y(\mathbf p)=\mathrm{occ}_y(\mathbf s)=k \ \text{ and } \ \mathrm{occ}_t(\mathbf p)=\mathrm{occ}_t(\mathbf s)=1.
$$
This is only possible when $\mathbf s=\mathbf p$, whence $\{\mathbf s,\mathbf t\}=\{xt\mathbf u,tx\mathbf u\}$ and so $\mathbf q \in\{xt\mathbf u,tx\mathbf u\}$.

Thus, the set $\{xt\mathbf u,tx\mathbf u\}$ forms a $\mathrm{FIC}(\mathbf X)$-class. 
By similar arguments we can show that $\{xt\mathbf v,tx\mathbf v\}$ is a $\mathrm{FIC}(\mathbf X)$-class and $xt\mathbf u$ is an isoterm for $\mathbf Y$.
The inclusion $\mathbf E\subseteq\mathbf V$ and Theorem~4.1(i) in~\cite{Sapir-21} imply that $xt\mathbf u$ and $tx\mathbf u$ lie in different $\mathrm{FIC}(\mathbf V)$-classes.
In view of the above, this implies that these words are isoterms for the variety $\mathbf V\vee\mathbf X$. 
Then the word $xt\mathbf u$ is an isoterm for $\mathbf Y\wedge(\mathbf V\vee\mathbf X)$. 
Obviously, $\mathbf Y\wedge\mathbf X$ satisfies the identity $xt\mathbf u\approx xt\mathbf v$. 
Clearly, this identity is also holds in $\mathbf V$.
Thus, $\mathbf V\vee(\mathbf Y\wedge\mathbf X)$ satisfies $xt\mathbf u\approx xt\mathbf v$.
Since $\mathbf V\subseteq \mathbf Y$, it follows that
$$
\mathbf V\vee(\mathbf Y\wedge\mathbf X)\subset \mathbf Y\wedge(\mathbf V\vee\mathbf X).
$$
This, however, contradicts the fact that $\mathbf V$ is a lower-modular element of the lattice $\mathbb{MON}$. 
Thus, Case~2 is impossible.
\end{proof}

\small

\end{document}